\numberwithin{equation}{section}
\newtheorem{theorem}{Theorem}[section]
\newtheorem{lemma}[theorem]{Lemma}
\newtheorem{question}[theorem]{Question}
\newtheorem{corollary}[theorem]{Corollary}
\theoremstyle{definition}
\newtheorem{definition}[theorem]{Definition}
\theoremstyle{remark}
\newtheorem{remark}[theorem]{Remark}
\theoremstyle{definition}
\newtheorem{example}[theorem]{Example}
\newcommand{\vanish}[1]{}
\newcommand{\A}{\mathcal{A}}
\newcommand{\al}{\alpha}
\newcommand{\be}{\beta}
\newcommand{\ga}{\gamma}
\newcommand{\reals}{\mathbb{R}}
\newcommand{\integers}{\mathbb{Z}}
\newcommand{\set}[1]{\left\lbrace#1\right\rbrace}
\newcommand{\margincolor}{red}      
\definecolor{darkgreen}{rgb}{0,0.7,0}
\newcounter{margincounter}
\newcommand{\marginnum}{
\ifnum\value{margincounter}<10
\textcolor{\margincolor}{\begin{picture}(0,0)\put(2.2,2.4){\circle{9}}\end{picture}\footnotesize\arabic{margincounter}}
\else\ifnum\value{margincounter}<100
\textcolor{\margincolor}{\begin{picture}(0,0)\put(4.256,2.5){\circle{11}}\end{picture}\footnotesize\arabic{margincounter}}
\else
\textcolor{\margincolor}{\begin{picture}(0,0)\put(6.8,2.5){\circle{14}}\end{picture}\footnotesize\arabic{margincounter}}
\fi\fi
}
\newcommand{\newword}[1]{{\em{#1}}}
\subjclass[2010]{Primary }
\begin{document}
\title{Coxeter arrangements in three dimensions}

\author{Richard Ehrenborg}
\address{Department of Mathematics \\ University of Kentucky, Lexington, KY 40506}
\email{jrge@ms.uky.edu}
\author{Caroline Klivans}
\address{Division of Applied Mathematics and Department of Computer Science \\ Brown University, Providence, RI 02906}
\email{klivans@brown.edu}
\author{Nathan Reading}
\address{Department of Mathematics, North Caroline State University, Raleigh, NC 27695} 
\email{reading@math.ncsu.edu}

\date{\today}

 \thanks{}
 
 \keywords{Hyperplane arrangements, Finite Coxeter systems, Spherical geometry}
 \subjclass{}

\begin{abstract}
  Let $\A$ be a finite real linear hyperplane arrangement in three
  dimensions.  Suppose further that all the regions of $\A$ are
  isometric.  We prove that $\A$ is necessarily a Coxeter arrangement.
  As it is well known that the regions of a Coxeter arrangement are
  isometric, this characterizes three-dimensional Coxeter
  arrangements precisely as those arrangements with isometric regions.
  It is an open question whether this suffices to characterize
  Coxeter arrangements in higher dimensions.
  We also present the three families of affine arrangements in
  the plane which are not reflection arrangements, but
  in which all the regions are isometric.
\end{abstract}

\maketitle

\section{Introduction}

In this note, we are concerned with the polyhedral geometry of Coxeter arrangements (hyperplane arrangements associated to finite Coxeter groups).
Coxeter groups are defined by  certain presentations,
but finite Coxeter groups coincide with finite real reflection groups.
The geometry, topology and combinatorics of Coxeter
arrangements have been extensively studied.
See e.g.~\cite{BB, Davis, GB}.

The regions of a real hyperplane arrangement are the connected components of the complement of the arrangement.  
Previous work of the second author,
with Drton~\cite{DK} and with Swartz~\cite{KS},
investigated certain projection volumes associated to a
region of an arrangement.  It was shown that the average volumes, over
all regions, are given by the coefficients of the characteristic
polynomial of the arrangement~\cite{DK,KS}.
When all of the regions of the hyperplane arrangement are isometric, this result determines the precise projection volumes for each region.  

Coxeter arrangements are a class of examples with the property that all regions of the arrangement are isometric.
As further examples could not be found, the following question arose:

\begin{question}[\cite{KS} Problem 13] \label{the q}
Does there exist a real central hyperplane arrangement with all regions isometric that is not a Coxeter arrangement?  
\end{question}

The main result of this paper is a negative answer to the question in three dimensions.
\begin{theorem}\label{thm:main}
 Any real central hyperplane arrangement in $\mathbb{R}^3$
 in which all the regions are isometric is a Coxeter arrangement. 
\end{theorem}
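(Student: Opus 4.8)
The plan is to pass from $\reals^3$ to the unit sphere $S^2$ and work in spherical geometry. Intersecting $\A$ with $S^2$ turns the arrangement into a finite family of great circles, and the regions of $\A$ become the faces of the resulting edge-to-edge cell decomposition of $S^2$; the hypothesis that all regions are isometric says that every face is a congruent copy of one fixed spherical polygon, so in particular all faces are convex $m$-gons for a common $m$, with a common area and common multisets of edge lengths and interior angles. The two structural facts I would exploit throughout are that each face is \emph{convex} (a region of a central arrangement is a convex cone; after discarding the trivial low-rank cases I may assume $\A$ is essential, so each cone is pointed, each face lies in an open hemisphere, and all angles lie in $(0,\pi)$), and that the $1$-skeleton is a union of great circles, so at every vertex the incident edges occur in collinear pairs. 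Hence every vertex has even degree at least $4$, and the angles around a vertex come in equal opposite pairs whose half-turn sums to $\pi$. Before the main argument I would dispose of the non-essential arrangements, where all circles pass through one antipodal pair: there the faces are lunes, and isometry forces all the lune angles to be equal, giving a dihedral Coxeter arrangement.

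Next I would show that $m=3$, a short Euler-characteristic count. Writing $V,E,F$ for the numbers of vertices, edges and faces, edge-to-edgeness gives $2E=mF$, while every vertex having degree at least $4$ gives $2E\ge 4V$, that is $E\ge 2V$. Substituting $V=E-F+2$ from Euler's formula $V-E+F=2$ yields $E\le 2F-4$, whence $mF=2E\le 4F-8<4F$, forcing $m<4$. Since a convex spherical polygon has $m\ge 3$, we conclude $m=3$: all faces are congruent spherical triangles.

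It remains to identify the tiling. Let the common triangle have angles $\al,\be,\ga$ with $\al+\be+\ga=\pi+A$, where $A=4\pi/F>0$ is the common area. At each vertex the half-turn condition reads $x\al+y\be+z\ga=\pi$ for some nonnegative integers with $x+y+z\ge 2$, one such relation per vertex. The goal is to prove that these relations, taken together with the global identities among $V,E,F$ and the rigidity of great-circle arrangements, force $\al,\be,\ga$ to have the form $\pi/p,\pi/q,\pi/r$ with $1/p+1/q+1/r>1$, and that the only resulting tilings are the reflection tilings. The finitely many admissible triples are $(2,2,n)$, $(2,3,3)$, $(2,3,4)$, and $(2,3,5)$, whose fundamental triangles are exactly the spherical fundamental domains of the rank-$3$ finite reflection groups $I_2(n)\times A_1$, $A_3$, $B_3$, and $H_3$; I would finish by checking that the triangle data reconstructs $\A$ and that $\A$ is closed under the reflection in each of its circles, so that $\A$ is the Coxeter arrangement of the corresponding group.

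The step I expect to be the main obstacle is precisely this last one: congruence of the triangles does \emph{not} by itself force reflection symmetry across each circle. Indeed, across an edge the triangle below may be either the mirror image of the triangle above or a rotated copy with the two adjacent angles interchanged, and only the former is reflective; ruling out the non-reflective gluings is where the work lies. My approach would be to show that the mixed vertices (those at which two distinct angles occur together) are so constrained by the requirement of extending consistently to full great circles that every angle must be a submultiple of $\pi$ and every circle must be a line of symmetry: concretely, I would analyze the possible partitions $x\al+y\be+z\ga=\pi$, eliminate those incompatible with $\al+\be+\ga>\pi$ and with global closure of the arrangement, and thereby reduce to the finite list above. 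This is in effect a sharpened, great-circle version of the classification of monohedral spherical triangle tilings, and carrying it out rigorously is the technical heart of the proof.
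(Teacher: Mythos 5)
Your setup is sound and your Euler-characteristic count showing $m=3$ is a nice elementary substitute for the paper's appeal to Shannon's theorem on simplicial regions. But the proposal has a genuine gap exactly where you flag it: the entire content of the theorem lives in the step you defer. You write that you ``would analyze the possible partitions $x\al+y\be+z\ga=\pi$ \ldots and thereby reduce to the finite list,'' but no argument is given for why the angles must be submultiples of $\pi$, why non-reflective gluings cannot occur, or why the tiling must be one of the four reflection tilings. As you yourself note, congruence of the triangles does not force reflective gluing across an edge, so without carrying out this analysis the proof is a plan rather than a proof. Moreover, the route you sketch (classify admissible $(p,q,r)$, reconstruct the arrangement, verify closure under reflections) is substantially heavier than what is needed.

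The paper closes this gap with two ideas you are missing. First, a reduction lemma: a central arrangement is a Coxeter arrangement (i.e.\ a closed system of mirrors) as soon as every rank-two subarrangement is; on the sphere this means it suffices to prove the purely local statement that \emph{all angles incident to any given vertex are equal}. No classification of the triangle or global reconstruction is required. Second, that local statement is proved by a short combination of (i) a ``Parity Lemma'' --- in a monohedral triangulation by a scalene triangle, an even maximal run of one angle around a vertex is bordered by two equal angles and an odd run by two different ones, because the shared edge lengths alternate; (ii) the angle-sum inequality $\al+\be+\ga>\pi$, which rules out all three angles meeting at one vertex; and (iii) Euler's formula, which produces a vertex of degree four, whence the Parity Lemma forces all four of its angles equal to $\pi/2$, and a final angle count at any mixed vertex yields $\al+\be\le\pi/2$ and hence the contradiction $\al+\be+\ga\le\pi$. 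The isoceles and equilateral cases are handled by similar direct counts. If you adopt the rank-two reduction, your spherical setup combined with arguments of this local flavor would complete the proof; as written, the technical heart is still open.
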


The proof of Theorem~\ref{thm:main} proceeds in several steps.
First, we quote a result of Shannon that allows us to restrict our attention to simplicial arrangements.
Next, we establish a sufficient (and in fact necessary) condition for a hyperplane arrangement to be a Coxeter arrangement:
All rank-two
subarrangements are Coxeter arrangements.  This reduction holds true in any
dimension.
Finally, instead of working with the
decomposition of space into regions, we  consider the
induced decomposition of the unit sphere.  In the three-dimensional
case, this
is a two-dimensional
spherical triangulation.  
The hyperplanes in the arrangement become great circles on the sphere and the rank-two subarrangements become collections of great circles all containing a common pair of antipodal points on the sphere.
We work explicitly with spherical geometry in order to prove that all these rank-two arrangements are dihedral.

In the final section we consider infinite affine arrangements in the
plane.  
In this case, there exist arrangements which are not reflection arrangements but which nonetheless have all regions isometric.

\section{Hyperplane arrangements}

In this section, we briefly review relevant definitions, point out that arrangements with isometric regions are necessarily simplicial, and reduce the problem of identifying Coxeter arrangements to a rank-two criterion.
All of the results of this section apply to arbitrary rank.

A \newword{finite real central hyperplane arrangement} is a finite collection $\A$ of linear hyperplanes (i.e.\ codimension-$1$ subspaces) in $\mathbb{R}^d$. 
An arrangement $\A$ in $\mathbb{R}^d$ is called \newword{essential} if the normal vectors to hyperplanes in $\A$ span $\mathbb{R}^d$.
Equivalently, $\A$ is essential if the intersection of its hyperplanes is
the origin.

A Coxeter group is a group arising from a certain kind of presentation by generators and relations.
Informally, the relations say that the generators act like linear reflections, and indeed, a finite group is a Coxeter group if and only if it has a representation as a (Euclidean) reflection group---a group generated by Euclidean reflections---in $\reals^d$.  
Given a specific representation of a Coxeter group $W$ as a reflection group, certain elements of $W$ act as reflections.
Each reflection has a \newword{reflecting hyperplane} or \newword{mirror}.
The \newword{Coxeter arrangement} associated to $W$ is the collection of the reflecting hyperplanes for all reflections in $W$.

It will be more convenient for us to work with an equivalent characterization of Coxeter arrangements as a \newword{closed system of mirrors}.
That is, a Coxeter arrangement is any hyperplane arrangement~$\A$ with the property that for any $H\in\A$, the (Euclidean) reflection fixing $H$ acts as a permutation of $\A$.

The \newword{regions} of an arrangement $\A$ are the connected components of $\mathbb{R}^d \setminus \A$.  
(Some authors use the term ``region'' for the closures of these connected components, but the distinction is unimportant here.)
The regions of a finite real central arrangement are unbounded convex polyhedra. 
If the arrangement is essential, then the regions are pointed polyhedral cones.

It is well known that the action of a Coxeter group on the regions of the corresponding Coxeter arrangement is simply transitive.  
Here, we only need transitivity, which is easy to see because each hyperplane is a mirror.
In particular, all the regions of a Coxeter arrangement are isometric.

In order to understand hyperplane arrangements in which all regions are isometric, we begin with an observation that allows us to only consider simplicial arrangements.
An arrangement is \newword{simplicial} if every region is the intersection of precisely $d$ open halfspaces.
Equivalently, each region is the positive linear span of $d$ linearly independent vectors. 
A simplicial arrangement is in particular essential.
The following theorem is due to Shannon~\cite{Shannon};
see also~\cite[Theorem~2.1.5]{OM}.

\begin{theorem}
\label{shannon}\cite{Shannon}
An essential arrangement $\mathcal{A}$ of $n$ hyperplanes in
$\mathbb{R}^d$ has at least $2n$ simplicial regions.
\end{theorem}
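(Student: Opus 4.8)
The plan is to induct on the number $n$ of hyperplanes, using the fact (noted above) that an essential arrangement satisfies $n \ge d$. For the base case $n = d$ the normals are linearly independent, so the hyperplanes are in general position and meet in a single point; the resulting $2^d$ regions are then all simplicial, and $2^d \ge 2d$. For the inductive step I would delete a single hyperplane $H$ chosen so that $\A' = \A \setminus \set{H}$ remains essential. Such an $H$ exists whenever $n > d$, since a spanning set of more than $d$ normal vectors always contains a vector whose removal still leaves a spanning set (otherwise the set would be a minimal spanning set, hence a basis of size $d$). By the inductive hypothesis $\A'$ has at least $2(n-1)$ simplicial regions, so it suffices to show that restoring $H$ produces a net gain of at least two simplicial regions.

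To analyze the effect of restoring $H$, I would pass to the induced arrangement $\A'|_H = \set{H \cap H' : H' \in \A',\ H' \cap H \neq \emptyset}$ inside $H \cong \mathbb{R}^{d-1}$, which is again essential because the orthogonal projection to $H$ sends a spanning set of normals of $\A$ to a spanning set of $H$ (the normal of $H$ maps to $0$, and normals parallel to it are redundant). Restoring $H$ leaves untouched every region of $\A'$ that $H$ misses, so in particular the simplicial ones among these survive, while each region of $\A'$ that $H$ crosses is split in two along a top-dimensional cell of $\A'|_H$. Conversely, every region of $\A$ incident to $H$ meets $H$ in a single top-dimensional cell of $\A'|_H$, and each such cell is the shared facet of exactly one region on either side. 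The bookkeeping therefore reduces to comparing which of the newly created regions adjacent to $H$ are simplicial against the simplicial regions of $\A'$ that $H$ destroys by cutting them.

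The main obstacle is precisely this accounting, which is genuinely delicate for two reasons. First, slicing a simplicial cone by a hyperplane through its interior need not yield two simplicial cones --- already in $\mathbb{R}^3$ a triangular cone can split into a triangular cone and a quadrilateral one --- so the destroyed simplices must be controlled rather than discarded. Second, a simplicial cell $C$ of $\A'|_H$ does not automatically lie below a simplicial region of $\A$, since the region with facet $C$ inherits its remaining facets from how $\A'$ closes up away from $H$, and these may be numerous. The way I would attempt to break this is to fix a generic linear functional and sweep $H$ in the normal direction, then locate simplicial regions at the \emph{extreme} cells of $\A'|_H$, those encountered first and last along the sweep, where the adjacent region of $\A$ is forced to be a simplex because no further facets can close it off. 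Producing one such extremal simplex on each side of $H$ is what would yield the two required new simplicial regions, and making this extremal argument rigorous is the crux.

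Finally, in the central case of interest here one can shorten the argument by exploiting the antipodal symmetry $x \mapsto -x$: simplicial regions occur in antipodal pairs, so on the unit sphere it suffices to exhibit $n$ spherical simplices up to antipody in the associated arrangement of great subspheres, after which the factor of two is automatic.
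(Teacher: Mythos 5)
The paper does not actually prove this theorem---it is quoted from Shannon, with a pointer to \cite[Theorem~2.1.5]{OM}---so there is no internal proof to compare against; your proposal must stand on its own, and it has a genuine gap. The problem is structural: the inductive hypothesis gives only the lower bound $2(n-1)$ for $\A'$, so to conclude the bound $2n$ for $\A$ you really do need, as you say, a \emph{net} gain of at least two simplicial regions upon restoring $H$. But that statement is false. Take $\A'$ to be the $B_3$ Coxeter arrangement in $\reals^3$ ($n-1=9$ hyperplanes, all $48$ regions simplicial) and let $H$ be a generic tenth plane through the origin. On the unit sphere the great circle of $H$ crosses the nine existing great circles in $18$ points and hence passes through $18$ spherical triangles; a great circle entering and leaving a spherical triangle through two of its edges cuts off a triangle and leaves a quadrilateral, so each of the $18$ crossed simplicial cones is replaced by one simplicial cone and one quadrilateral cone. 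The arrangement $\A$ therefore again has exactly $48$ simplicial regions: the net gain is $0$. The conclusion $48\ge 20$ still holds, but only because $\A'$ had far more than $2(n-1)$ simplicial regions---information the induction has discarded. So even a complete version of your extremal argument, producing two new simplicial regions adjacent to $H$, would not close the induction, because you cannot control how many simplicial regions of $\A'$ the hyperplane $H$ destroys relative to the bound $2(n-1)$.

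Beyond this, the step you yourself identify as the crux---locating simplicial regions at the extreme cells of $\A'|_H$---is not carried out, and the proposed device of ``sweeping $H$ in the normal direction'' does not directly make sense for a central arrangement, since a translate of $H$ is no longer a hyperplane of the arrangement; one would have to pass to the projective picture or replace the sweep by a rotation. The standard proof of Shannon's theorem avoids induction on $n$ altogether: one fixes a base region and, for each hyperplane $H$ and each of its two sides, runs a local descent (repeatedly crossing a wall other than $H$ so as to move away from the base region while retaining $H$ as a wall) that terminates at a simplicial region having $H$ as a facet hyperplane; the construction is arranged so that the $2n$ pairs (hyperplane, side) yield $2n$ distinct simplicial regions. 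If you want to keep an inductive flavor, you would need to strengthen the induction hypothesis to something that is stable under adding a hyperplane---for instance, a statement about each hyperplane individually bordering simplicial regions on both sides---which is essentially Shannon's formulation rather than a bound on the total count.
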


For our purposes, this theorem
is stronger than necessary. 
By the theorem, every essential arrangement has at least one simplicial region, so we have the following corollary.
\begin{corollary}\label{simp cor}
If $\A$ is essential and all of the regions of $\A$ are isometric, then $\A$ is simplicial.
\end{corollary}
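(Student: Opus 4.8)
The plan is to extract essentially all of the content from Shannon's theorem (Theorem~\ref{shannon}) and then observe that the simplicial property is transported across isometries. First I would note that an essential arrangement in $\mathbb{R}^d$ must contain at least $d\ge 1$ hyperplanes, since its normal vectors span $\mathbb{R}^d$; in particular $\A$ is nonempty, with $n\ge 1$ hyperplanes. Shannon's theorem then guarantees at least $2n\ge 2$ simplicial regions, so $\A$ has at least one simplicial region $R_0$.

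Next I would recall that, because $\A$ is essential and central, every region is a pointed polyhedral cone, and that $R_0$ being \emph{simplicial} means precisely that $R_0$ is the intersection of exactly $d$ open halfspaces (equivalently, is the positive span of $d$ linearly independent vectors). The key elementary point is that this property is an invariant of the congruence class of the cone: any Euclidean isometry of $\mathbb{R}^d$ is an affine bijection that carries faces to faces and preserves their dimensions, hence carries a cone with exactly $d$ facets to another cone with exactly $d$ facets. Consequently, any region isometric to $R_0$ is itself simplicial.

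Finally I would invoke the hypothesis: all regions of $\A$ are isometric to one another, and hence in particular all isometric to the simplicial region $R_0$. By the previous paragraph, every region of $\A$ is then simplicial, which is exactly the assertion that $\A$ is a simplicial arrangement.

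There is no genuinely hard step here; the entire geometric weight is borne by Shannon's theorem, which supplies a single simplicial region. The only things requiring care are the minor bookkeeping that $\A$ is nonempty (so that Shannon yields a positive count) and the explicit observation that ``having exactly $d$ facets'' is preserved under Euclidean isometry---both of which are routine.
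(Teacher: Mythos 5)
Your argument is correct and is exactly the paper's (implicit) proof: Shannon's theorem supplies one simplicial region, and since being simplicial is an isometry invariant, the hypothesis that all regions are isometric forces every region to be simplicial. The extra bookkeeping you supply (nonemptiness of $\A$, invariance of facet count under isometry) is routine and consistent with what the paper leaves unstated.
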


\begin{remark}\label{nonessential}
In the proof of Theorem~\ref{thm:main}, we restrict our attention to essential arrangements.
We can do so because in dimensions $1$ and~$2$, the answer to Question~\ref{the q} is obviously no, and this negative answer lifts easily to non-essential arrangements in $\reals^3$.
Given a non-essential arrangement in $\reals^3$, one takes the quotient of $\reals^3$ modulo the intersection $\cap\A$ of all hyperplanes in $\A$, and the quotient of each hyperplane in $\A$ modulo $\cap\A$ to obtain a lower-dimensional arrangement.
For example if $\cap\A$ is a line $L$, then $\A$ is a ``pencil'' of planes containing~$L$, and the quotient is an arrangement of lines in a two-dimensional vector space.
The quotient arrangements is necessarily a Coxeter arrangement, and thus the original nonessential arrangement is also.
\end{remark}

Corollary~\ref{simp cor} and Remark~\ref{nonessential} let us restrict our attention to simplicial arrangements with isometric regions.
The following lemma gives us an easier criterion to check in order to conclude that an arrangement is a Coxeter arrangement, namely that codimension-$2$ suffices.
The converse is easy, but we do not need it.

\begin{definition}
Let $\A$ be a hyperplane arrangement in $\reals^d$.
A \newword{rank-two subarrangement} of $\A$ is a subset $\A'$ of $\A$ such that (i) there exists a codimension-$2$ subspace $U$ of $\mathbb{R}^d$ such that $\A'$ is the set of all hyperplanes in $\A$
containing $U$, and  (ii) $\A'$ has at least two hyperplanes.
\end{definition}

\begin{lemma}\label{reduction}
Let $\A$ be a finite central hyperplane arrangement in $\mathbb{R}^d$.  
If every rank-two subarrangement $\A'$ of $\A$ is a Coxeter arrangement then $\A$ is a Coxeter arrangement.
\end{lemma}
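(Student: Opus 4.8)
The plan is to verify the closed-system-of-mirrors characterization recalled above directly: writing $s_H$ for the Euclidean reflection fixing a hyperplane $H$, it suffices to show that for every $H \in \A$ the map $s_H$ permutes $\A$. Since $s_H(H) = H$ trivially, the real content is to show $s_H(K) \in \A$ for each $K \in \A$ with $K \neq H$.

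First I would localize to a single rank-two subarrangement. Given distinct $H, K \in \A$, their intersection $U = H \cap K$ has codimension two: distinct linear hyperplanes satisfy $H + K = \mathbb{R}^d$, so $\dim(H \cap K) = (d-1)+(d-1)-d = d-2$. Let $\A'$ be the set of all hyperplanes of $\A$ containing $U$. Then $\A'$ contains the two distinct hyperplanes $H$ and $K$, so it is a rank-two subarrangement in the sense of the definition, and the hypothesis applies: $\A'$ is a Coxeter arrangement, i.e.\ a closed system of mirrors. Hence $s_H$ permutes $\A'$, and in particular $s_H(K) \in \A' \subseteq \A$. Letting $K$ range over all hyperplanes of $\A$ shows that $s_H$ permutes $\A$, and letting $H$ range over $\A$ shows that $\A$ is itself a closed system of mirrors, hence a Coxeter arrangement.

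The one point requiring care---and the closest thing to an obstacle in an otherwise short argument---is the bookkeeping around the reflection $s_H$. The Euclidean reflection fixing a hyperplane is determined by the hyperplane alone, so $s_H$ means the same thing whether $H$ is viewed inside $\A$ or inside $\A'$; this is exactly what allows the rank-two hypothesis to feed back into a statement about all of $\A$. It is also worth noting that $s_H$ fixes $U$ pointwise (since $U \subseteq H$), so $s_H$ automatically carries hyperplanes through $U$ to hyperplanes through $U$; the nontrivial information is precisely that $s_H(K)$ again lies in $\A'$, which is just the closure supplied by the hypothesis. Beyond confirming that $\A'$ satisfies both clauses of the definition of rank-two subarrangement, no further work is needed, and the argument is insensitive to the dimension $d$, as the excerpt anticipates.
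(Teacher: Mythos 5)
Your proposal is correct and follows essentially the same route as the paper: localize to the rank-two subarrangement through $H \cap K$, invoke the closed-system-of-mirrors characterization there, and conclude $s_H(K) \in \A' \subseteq \A$. The extra care you take (the dimension count for $H \cap K$ and the observation that $s_H$ fixes $U$ pointwise) is sound but not needed beyond what the paper already records.
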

\begin{proof}
Let $\A$ be as above and assume every rank-two subarrangement is a
Coxeter arrangement.
We need to show that $\A$ is a closed system of mirrors.
That is, given hyperplanes $H_1$ and $H_2$ in~$\A$, writing $s_1$ for the the Euclidean reflection fixing $H_1$, we must show that the image of $H_2$ under~$s_1$ is also in $\A$. 
But $H_1$ and $H_2$ intersect in a linear subspace $U$ of dimension $d-2$, and the set of hyperplanes of $\A$ containing $U$ is a rank-two subarrangement $\A'$ of $\A$.
Since $\A'$ is a Coxeter arrangement, $s_1(H_2)$ is in $\A'$ and thus in $\A$ as desired.
\end{proof}

\section{The three-dimensional case}
\label{main}

In this section, we prove Theorem~\ref{thm:main} by rephrasing it as a statement in two-dimensional spherical geometry.
That is, instead of working directly with a simplicial arrangement of hyperplanes in $\reals^3$, we work with an arrangement of great circles on a sphere:
Each linear hyperplane intersects the unit sphere about the origin in a great circle.
Each region intersects the sphere in a spherical triangle.
The hypothesis that all regions in the arrangement are isometric is equivalent to the hypothesis that all of these triangles are congruent.
In light of Lemma~\ref{reduction}, to prove Theorem~\ref{thm:main}, it is enough to show that every rank-two subarrangement is a Coxeter arrangement.
Every rank-two subarrangement is the set of hyperplanes containing some line, corresponding to the set of all great circles containing some point (or in fact two antipodal points).
The hypothesis of Lemma~\ref{reduction} is equivalent to the statement that all of the angles at a given vertex are congruent.
Thus we can prove Theorem~\ref{thm:main} by establishing the following result.

\begin{theorem}\label{thm:main redux}
In any arrangement of great circles that cuts the sphere into congruent triangles, for any vertex, all angles incident to the vertex must be the same. 
\end{theorem}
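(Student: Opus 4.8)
The plan is to reformulate the statement as a rigidity property of the spherical tiling and then to show that, at any vertex, the only way congruent triangles can fit together is with a single angle repeated. Since all triangles are congruent, let $\alpha,\beta,\gamma$ denote the three angles of the common triangle and $a,b,c$ the lengths of the opposite sides. Because the arrangement is built from great circles, the induced tiling is edge-to-edge: an edge of one triangle is a full edge of its neighbor, so every edge has length in $\{a,b,c\}$, and at each corner the two flanking edges are exactly the two sides of the triangle adjacent to that corner's angle. I would first fix a vertex $v$ where $k$ great circles cross. Each such great circle passes through the antipode $-v$, so its two arcs leave $v$ in opposite tangent directions; hence the $2k$ rays at $v$ are invariant under the point reflection of the tangent plane. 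Consequently the $2k$ corner angles $\theta_1,\dots,\theta_{2k}$ satisfy the vertical-angle relation $\theta_{i+k}=\theta_i$ and sum to $2\pi$.

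Next I would package the constraints this produces. Writing $\ell_0,\dots,\ell_{2k-1}$ for the edge lengths along the rays, the edge-to-edge condition makes the cyclic sequence $(\ell_i)$ a closed walk on the complete graph with vertices $a,b,c$, where the step $\{\ell_{i-1},\ell_i\}$ carries the corner angle $\theta_i$ determined by that pair of sides. Thus any legal vertex figure must obey (i) this edge-matching/walk condition, (ii) the vertical symmetry $\theta_{i+k}=\theta_i$, and (iii) the angle sum $\sum_i\theta_i=2\pi$. The equilateral case is immediate. In the isosceles case $a=b$ I expect (i) and (iii), together with the spherical excess $\alpha+\beta+\gamma>\pi$, to force a single repeated angle: each side of length $c$ is flanked by two equal angles, so the number of apex corners per period is even, and the surviving angle-sum identities can hold only when the excess vanishes. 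The scalene case is where the real difficulty sits.

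The hard part is that conditions (i)--(iii) are \emph{not} sufficient by themselves. For instance, at a vertex where four great circles cross, the angle pattern $\gamma,\alpha,\alpha,\gamma$ repeated, with edge pattern $a,b,c,b$ repeated, is a genuinely mixed vertex figure in which adjacent angles differ, yet it satisfies all three local conditions as soon as $\alpha+\gamma=\tfrac{\pi}{2}$; scalene spherical triangles with this relation exist. So the mixed configurations cannot be excluded by local bookkeeping alone, and the global rigidity of the tiling must be invoked. My plan here is to argue by contradiction: assuming a vertex carries two distinct incident angles, I would propagate the rigid data (the fixed side lengths and fixed angles) across each shared edge to the far endpoint, whose own vertex figure must again satisfy (i)--(iii), and use the spherical laws of cosines and sines to turn the forced edge-length coincidences into forced equalities among $a,b,c$ or among $\alpha,\beta,\gamma$. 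Such equalities contradict scalene-ness, or else yield an angle relation incompatible with closing the tiling up over the whole sphere.

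This propagation-plus-trigonometry step is the main obstacle. I expect it to require a finite case analysis of the admissible mixed vertex types, with explicit spherical-geometric computation eliminating each, precisely because the purely combinatorial conditions admit the spurious solutions exhibited above. Once every vertex is shown to have all incident angles equal, the theorem is proved; combined with Lemma~\ref{reduction}, this establishes Theorem~\ref{thm:main}.
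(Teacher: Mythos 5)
Your setup (vertical-angle symmetry, angle sum $2\pi$, edge-to-edge matching of the sides $a,b,c$ around a vertex) matches the paper's, and your diagnosis is exactly right: the local conditions admit spurious mixed vertex figures in the scalene case, such as the degree-$8$ pattern $\gamma,\alpha,\alpha,\gamma$ repeated with $\alpha+\gamma=\pi/2$, which also passes the paper's Parity Lemma. But the step that eliminates these configurations --- the entire content of the theorem in the scalene case --- is left as a plan (``propagate the rigid data\dots use the spherical laws of cosines and sines\dots a finite case analysis'') rather than an argument. Nothing in the proposal identifies which quantity the propagation would control, which cases arise, or why each computation closes; as written, the proof has a hole precisely where you located the difficulty.

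For comparison, the paper closes this gap without any propagation or trigonometric case analysis, by a single global input: Euler's formula guarantees a vertex of degree at most $5$, and since great circles force even degree, some vertex has degree $4$; opposite angles there are equal and the Parity Lemma forces all four to be equal, hence right angles, so one of $\alpha,\beta,\gamma$ equals $\pi/2$. At a mixed scalene vertex the even-run structure gives $2\cdot 2m\alpha+2\cdot 2n\gamma=2\pi$, hence $\alpha+\gamma\le\pi/2$ with neither angle equal to $\pi/2$; the right angle must therefore be the third angle $\beta$, and then $\alpha+\beta+\gamma\le\pi$ contradicts the positive spherical excess. In particular your candidate configuration with $\alpha+\gamma=\pi/2$ is killed by exactly this argument. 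Your isosceles sketch is closer to complete (the paper counts that every appearance of the base angle at $v$ consumes an edge of the odd length, forcing at least four base angles and two apex angles, whence $\alpha+2\beta\le\pi$), but it too would need to be written out. To repair your proof, you should either carry out the trigonometric elimination you propose or replace it with a global counting argument of the Euler-formula type.
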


Our argument for Theorem~\ref{thm:main redux} uses some well-known facts about spherical geometry.
First, consider a single spherical triangle with angles $\alpha$, $\beta$ and $\gamma$.  
The area of the triangle is $\al + \be + \ga - \pi$.
The fact that area is positive gives us the inequality $\al + \be + \ga > \pi$, which will be useful throughout the proof.
Also implicit throughout the proof is
the fact that a spherical triangle has exactly as many distinct side lengths as it has distinct angles.
This follows from the spherical law of sines:
If $a$, $b$, and $c$ are the side-lengths opposite the angles $\al$, $\be$, and $\ga$ respectively, then 
$\frac{\sin(a)}{\sin(\al)} = \frac{\sin(b)}{\sin(\be)} = \frac{\sin(c)}{\sin(\ga)}.$

Next, we need an easy lemma about the possible sequences of angles around a given vertex.  
We state the lemma for spherical geometry, but it is not special to the sphere.

\begin{lemma}[Parity Lemma]\label{parity}
Consider a triangulation of the sphere into isometric triangles with
three distinct angles.  
Then, when reading the angles around a vertex $v$,
if a maximal run of an angle has even length then
the two angles bordering the run are the same.
If a maximal run of an angle has odd length then
the two angles bordering the run are different.
\label{lemma_parity}
\end{lemma}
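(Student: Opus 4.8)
The plan is to translate the combinatorial statement about runs of angles into a statement about the lengths of the edges emanating from $v$, and then exploit a simple alternation. First I would invoke the spherical law of sines, recalled just above, to conclude that since the triangle has three distinct angles $\al$, $\be$, $\ga$, it has three distinct side lengths; write $a$, $b$, $c$ for the lengths opposite $\al$, $\be$, $\ga$ respectively. The crucial observation is that a triangle contributing the angle $\al$ at $v$ has, as its two edges emanating from $v$, precisely the two sides adjacent to $\al$, namely those of lengths $b$ and $c$; similarly an angle $\be$ at $v$ is flanked by edges of lengths $a$ and $c$, and an angle $\ga$ by edges of lengths $a$ and $b$. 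Since $a$, $b$, $c$ are distinct, the unordered pair of edge lengths flanking an angle determines that angle uniquely.

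Next I would read off the edges emanating from $v$ in cyclic order, noting that consecutive triangles around $v$ share an edge, so these edges form a well-defined cyclic sequence of lengths, each edge being flanked by two angles whose values it constrains. Focus on a maximal run of (say) the angle $\al$, of length $n$, and label the edges bounding the triangles of this run as $f_0, f_1, \dots, f_n$, where $f_0$ and $f_n$ are the outer edges shared with the bordering (non-$\al$) triangles. Because each triangle in the run has edges of lengths $b$ and $c$ at $v$, and consecutive triangles share an edge, the sequence $f_0, f_1, \dots, f_n$ must alternate strictly between the values $b$ and $c$. Hence $f_0 = f_n$ if and only if $n$ is even.

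It then remains to read the bordering angles off the outer edge lengths. Since the triangle bordering the run on the left is not an $\al$ triangle, its angle at $v$ is $\be$ or $\ga$; matching its shared edge against $f_0 \in \{b, c\}$ forces $f_0 = c$ when that angle is $\be$ and $f_0 = b$ when it is $\ga$, and symmetrically for the right border and $f_n$. Combining this with the alternation: when $n$ is even we have $f_0 = f_n$, so both bordering angles are $\be$ (if the common value is $c$) or both are $\ga$ (if it is $b$), and they agree; when $n$ is odd we have $f_0 \neq f_n$, so one bordering angle is $\be$ and the other is $\ga$, and they differ. This is exactly the assertion of the lemma.

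I do not anticipate a serious obstacle here: the entire content is the alternation of edge lengths within a run together with the dictionary between edge lengths and angles. The one point requiring care is the bookkeeping that identifies which side lengths flank which angle and enforces consistency of edge lengths across shared edges; once that dictionary is fixed, the parity conclusion is immediate.
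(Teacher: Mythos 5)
Your proposal is correct and follows essentially the same route as the paper: both arguments use the law of sines to get three distinct side lengths and then track the alternation of the two relevant edge lengths along the shared edges of the run, reading the bordering angles off the two outer edges. Your write-up simply makes explicit the ``dictionary'' between flanking edge lengths and angles that the paper leaves to a figure and the phrase ``the result follows.''
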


That is, if the angles are $\al \neq \be \neq \ga \neq \al$, a maximal run of $\be$'s must be bordered by the other angles in one of
the following
four ways:
\[\alpha,\!\!\underbrace{\,\beta, \ldots, \beta}_{\text{even number}}\!, \alpha,
\:\:\:\: \:\:\:\:
\gamma,\!\!\underbrace{\,\beta, \ldots, \beta}_{\text{even number}}\!, \gamma, 
\:\:\:\: \:\:\:\:
\alpha,\!\!\underbrace{\,\beta, \ldots, \beta}_{\text{odd number}}\!, \gamma 
\:\:\:\: \text{ or } \:\:\:\:
\gamma,\!\!\underbrace{\,\beta, \ldots, \beta}_{\text{odd number}}\!, \alpha .
\]

\begin{proof}
Assume that the maximal run consists of $k$ triangles with
the angle $\beta$ and, without loss of generality, that the angle $\alpha$ is before the run.  
Figure~\ref{fig:parity} shows the case $k=3$.
The first shared edge (the edge between the first and
second triangles) has length $c$. 
The next shared edge has length $a$, and the shared edges alternate between $a$ and $c$.
The result follows.
\end{proof}

\begin{figure}[h]
\includegraphics[height=1.5in]{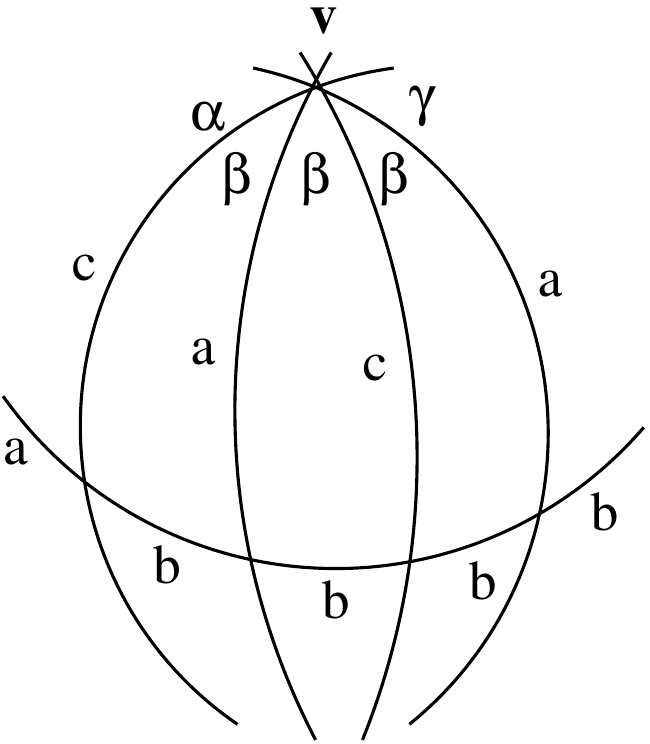}
\caption{An odd run of $\be$'s following the angle $\al$ must be followed by $\ga$.}
\label{fig:parity}
\end{figure}

We are now ready to prove
Theorem~\ref{thm:main redux},
which in turn proves the main
Theorem~\ref{thm:main}.

\begin{proof}[Proof of Theorem~\ref{thm:main redux}] 
Since all of the triangles defined by the arrangement of great circles are congruent, in particular, they have the same three angles.
Write $\al$, $\be$ and $\ga$ for these angles.
We need to show that around any vertex, at most one angle can appear.
We break up the argument based on the number of distinct angles appearing in $\set{\al,\be,\ga}$.

The first case is that the three angles are all distinct.
Consider the angles around a fixed vertex~$v$.  
Suppose all three angles appeared around $v$.  
At a minimum we would have six triangles meeting at~$v$ with each angle appearing exactly twice.  
The sum of the angles around $v$ would then be at least
$2(\al + \be + \ga)$.  
But then since $\al + \be + \ga>\pi$, the sum around $v$ would
be strictly larger than~$2\pi$, a contradiction.
Hence at most two distinct angles can appear around a fixed vertex.

We now claim that at least one of the angles is a right angle.
A well-known consequence of Euler's formula~\cite[Exercise~6.1.8]{West} is that there must exist a vertex of degree less than or equal to~$5$.
Since the triangles are defined by great circles,
each vertex is incident with an even number of angles.
Hence there is a vertex incident with exactly four angles.
Opposite angles at that vertex are the same, and the Parity Lemma (Lemma~\ref{parity}) implies that all four angles are the same.
Therefore all four angles are right angles, and the claim is proved.

Suppose two angles,
without loss of generality $\al$ and $\be$, appear around a vertex $v$.  
By the Parity Lemma, each run of a fixed angle must be of even length.
Since opposite angles across the vertex are the same, the runs come in opposite pairs across the vertex.  
Summing the angles around $v$ thus yields $2 \cdot 2m \al + 2 \cdot 2n \be = 2\pi$ for some positive integers $m$ and $n$.  Thus $\al + \be \leq m \al + n \be = \frac{\pi}{2}$.  
In particular, neither $\al$ nor $\be$ is $\pi/2$, so the claim says that $\ga=\frac\pi2$.
This yields the contradiction $\al + \be + \ga \leq \pi$.  
Hence we conclude that if all three angles of $\Delta$ are distinct, then no two different angles can meet at the same vertex, and we have finished the first case.

The second case is when two of the three angles are equal and distinct from the third.
Assume that the two distinct angles are $\al$ and $\be$ with $\be$ occurring twice and that the side opposite the angle~$\al$ has length $a$ while each side opposite an angle $\be$ has length $b$.

Consider the angles at a vertex $v$.
If $v$ is incident to an edge of length $a$, then this edge is shared by two triangles, both having the angle $\be$ at $v$.
Conversely, every angle $\be$ at $v$ includes a side of length $a$.
Thus if $v$ has degree greater than $4$, then the number of appearances of $\be$ incident to~$v$ is twice the number of edges of length $a$ incident to $v$.
We conclude that if $\al$ and $\be$ are both incident to $v$, then since opposite angles across $v$ are the same, there are at least $2$ appearances of~$\al$ and $4$ appearances of $\be$.
In particular $2 \al+4\be\le2\pi.$
Dividing by $2$, we contradict the inequality $\al+\be+\be>\pi$.
This contradiction finishes the second case.

In the third case, where $\al = \be = \ga$, the conclusion of the theorem is immediate.
\end{proof}

\section{Affine arrangements}

There are affine line arrangements
(with an infinite number of lines)
of the plane $\mathbb{R}^{2}$
which have isometric regions but are not
Euclidean reflection arrangements.
\begin{example}
{\rm
Take the affine arrangement $\widetilde{A}_{2}$
and apply a linear transformation, such that
the image of a region is
a triangle with at least two angles
different, that is, 
not an equilateral triangle.
The result is an arrangement where the angles around any vertex
are $\al$, $\be$, $\ga$, $\al$, $\be$ and $\ga$.
Observe that this agrees with the parity lemma,
which holds in the affine case by the same proof.}
\label{example_affine_one}
\end{example}
\begin{example}
{\rm
Take the affine arrangement $\widetilde{B}_{2}$,
that is,
all the lines of the form $x=k$, $y=k$, and $x\pm y=2k$ for $k\in\integers$.
This arrangement cuts the plane into congruent $(\pi/4,\pi/4,\pi/2)$-triangles.
Now apply the linear transformation
${\scriptsize \begin{pmatrix} c & 0 \\ 0 & 1 \end{pmatrix}}$
where $c$ is a positive number different from~$1$,
that is, a scaling in the $x$ direction.
The resulting arrangement cuts the plane into congruent $(\alpha,\beta,\pi/2)$-triangles.
Since $c \neq 1$, the
angles $\alpha$ and $\beta$ differ.
Note that this is not a reflection arrangement
since the triangle with vertices
$(0,0), (c,0), (c,1)$
is not a reflection of the triangle
$(0,0), (0,1), (c,1)$.}
\label{example_affine_two}
\end{example}
\begin{example}
{\rm
Start with the arrangement consisting of all integer
translates of the coordinate axis. Apply a linear transformation
such that the image of a region is a parallelogram, not a rectangle.
Here the angles around a vertex are
$\al$, $\be$, $\al$ and $\be$.
Note that this does not contradict the parity lemma
since the regions are quadrilaterals.}
\label{example_affine_three}
\end{example}

These three examples are the only families of affine arrangements that are not reflection arrangements and yet cut the plane into isometric regions (However, each arrangement is the image of a reflection arrangement under a linear map.) 
The fact that these are the only examples can be justified using similar techniques as in the proof of Theorem~\ref{thm:main redux}.
We omit the details.

\section*{Acknowledgments}
The first author was partially supported by
National Security Agency grant~H98230-13-1-0280
and wishes to thank the Mathematics Department of
Princeton University where he spent his sabbatical
while this paper was written.
The third author was supported by
National Science Foundation grant DMS-1101568.

\end{document}